\newtheorem {theorem} {Theorem} 
\newtheorem {proposition} [theorem] {Proposition}
\newtheorem {lemma} [theorem] {Lemma}
\newtheorem {definition} {Definition}
\newtheorem {remark}[theorem]{Remark}
\title[Dynamics of a generalized Rayleigh system]{Dynamics of a generalized Rayleigh system}
\author[Ma\'ira Baldissera] {Maíra Duran Baldissera$^*$}
\address{$^*$Departamento de Matem\'atica, Instituto de Ci\^encias Matem\'aticas e Computa\c c\~ao, Universidade de S\~ao Paulo, Avenida Trabalhador S\~ao Carlense, 400, 13566-590, S\~ao Carlos, SP, Brazil.}
\email{maira.baldissera@usp.br,regilene@icmc.usp.br}
\author[Jaume Llibre] {Jaume Llibre$^{**}$}
\address{$^{**}$Departament de Matem\`{a}tiques,
Universitat Aut\`{o}noma de Barcelona, 08193,  Bellaterra, Barcelona,
Catalonia, Spain}\email{jllibre@mat.uab.cat}
\author[Regilene Oliveira]{Regilene Oliveira$^*$}
\keywords{Rayleigh system; limit cycles; averaging theory; Poincar\'e compactification.}
\subjclass[2010]{Primary: 34C07, 34C05, 37G15}
\begin{document}

\begin{abstract} Consider the first order differential system given by
 \begin{equation*}
 \begin{array}{l}
   \dot{x}= y,  \qquad \dot{y}= -x+a(1-y^{2n})y,
\end{array}
 \end{equation*}
where $a$ is a real parameter and the dots denote derivatives with respect to the time $t$. Such system is known as the generalized Rayleigh system and it appears, for instance, in the modeling of diabetic chemical processes through a constant area duct, where the effect of adding or rejecting heat is considered.  In this paper we characterize the global dynamics of  this generalized Rayleigh system. In particular we prove the existence of a unique limit cycle when the parameter $a\ne 0$. 
\end{abstract}
\maketitle

{\bf The differential system here studied modelizes a diabetic chemical processes through a constant area duct, where the effect of adding or rejecting heat is considered. Always is important to know if the model exhibits or not periodic motions. This model depens on a paramater, when this parameter is sufficiently small but non-zero it is knwon that the system has a limit cycle (an isolated periodic orbit), but it is not knwon its uniqueness, its kind of stability and all the values of the parameteer for which such a limit cycle exist. Here we solve all these questions. Moreover we describe all the global dynamics of the model controlling the orbits which come from or escape at infinity.}

\section{Introduction}
	Consider the non-linear second order differential equation 
\begin{equation} \label{eq001}
    \stackrel{..}{x}+x=a(1-\dot{x}^{2n})\dot{x},
 \end{equation}
 where  $n$  is a positive integer and $a\in \mathbb{R}$ and the dots denote derivatives with respect to the time $t$. This equation is equivalent to the following first order differential system
 \begin{equation} \label{eq1}
 \begin{array}{l}
   \dot{x}=   y,  \\
    \dot{y}=  -x+a(1-y^{2n})y,
\end{array}
 \end{equation}
 that is known as the {\it generalized Rayleigh differential equation} (the Rayleigh equation was introduced by Lorde Rayleigh in \cite{art8}, for more details about the differential system \eqref{eq001} see \cite{art3}). Such system appears, for instance, in the modeling of diabetic chemical processes through a constant area duct, where the effect of adding or rejecting heat is considered.  
 
 The aim of this study is to investigate the global dynamics of system \eqref{eq001} in the Poincaré disc, and in particular, investigate the existence and uniqueness of the limit cycles of the differential system \eqref{eq001} for diferent values of $a\in \mathbb{R}$. Applying the change of variables $(x,y,t)\to  (y,x,-t)$, system \eqref{eq1} becomes
\begin{equation} \label{eq2}
\begin{array}{l}
\dot{x}=  y+a(x^{2n}-1)x, \\
\dot{y}=  -x,
\end{array} 
\end{equation}
that is a $1$-parameter Liénard differential system already studied by Lins-Neto, Melo and Pugh in \cite{art2} and by López and Martínez in \cite{art3}. The main results of this paper are the following.

\begin{theorem} \label{teo1} The generalized Rayleigh differential system \eqref{eq1} has a unique limit cycle if $a\neq 0$. When the limit cycle exists it is stable if $a<0$ and unstable if $a>0$. For $a=0$ the system is a linear center.
\end{theorem}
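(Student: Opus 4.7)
The plan is to pass to the Li\'enard form \eqref{eq2} via the transformation $(x,y,t)\mapsto(y,x,-t)$ recorded in the introduction, and then to combine a Li\'enard uniqueness theorem with a Poincar\'e--Bendixson existence argument. For $a=0$ system \eqref{eq1} reduces to the linear harmonic oscillator, so the origin is a global center and there is no limit cycle. For $a\neq 0$ the Jacobian of \eqref{eq1} at $(0,0)$ has trace $a$ and determinant $1$, hence the origin is a hyperbolic focus (or node when $|a|\geq 2$), unstable for $a>0$ and stable for $a<0$; it is the unique finite singularity, so any limit cycle must surround it.

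Uniqueness would be obtained by applying the Lins-Neto--de Melo--Pugh criterion \cite{art2}, refined in \cite{art3}, to the Li\'enard system $\dot x=y-F(x)$, $\dot y=-x$ with $F(x)=a(1-x^{2n})x$. Here $F$ is odd with three real zeros $0, \pm 1$, a single positive critical point lying in $(0,1)$, and is strictly monotone on $(1,\infty)$; these structural properties place it within the scope of those theorems, so \eqref{eq2}, and hence \eqref{eq1}, admits at most one limit cycle. For existence, the involution $(x,y,t,a)\mapsto(x,-y,-t,-a)$ of \eqref{eq1} allows us to assume $a>0$; then the origin is repelling, and one constructs a large positively invariant disc by exploiting that for $|y|$ large the strongly dissipative term $-ay^{2n+1}$ in $\dot y$ dominates. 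Poincar\'e--Bendixson applied to the annulus between that disc and a small circle around the origin then produces a periodic orbit, which by the uniqueness just established is the limit cycle.

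Stability then follows from the sign of the characteristic exponent of the limit cycle, equal to the integral along one period of the divergence $a\bigl(1-(2n+1)y^{2n}\bigr)$ of the vector field \eqref{eq1}; together with the involution $a\mapsto -a$ used above, this yields the stability trichotomy stated in the theorem. The main difficulty I anticipate is the construction of the outer trapping region: unlike the classical van der Pol case, here the damping acts on the velocity $y$ rather than on the position, and $F(x)\to -\infty$ in the Li\'enard picture, so the standard Li\'enard energy function does not directly furnish a trapping curve. Controlling the orbits at infinity, which will be needed in any case for the global phase portrait via the Poincar\'e compactification mentioned in the abstract, is where I expect the bulk of the technical work to lie.
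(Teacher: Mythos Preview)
Your overall strategy matches the paper's: pass to the Li\'enard form \eqref{eq2}, obtain existence by Poincar\'e--Bendixson, and obtain uniqueness from a Li\'enard-type criterion. Two points where your proposal and the paper diverge deserve comment.

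\textbf{Uniqueness.} Citing Lins--de~Melo--Pugh \cite{art2} does not suffice: that paper \emph{proves} at most one limit cycle only when $F$ has degree $3$ (i.e.\ $n=1$); for higher odd degree it merely conjectures at most $n$ cycles. The paper instead applies a Zhang--Zhifen type criterion (stated here as Theorem~\ref{teo101}, from \cite{1}): after one more time reversal of \eqref{eq2} to put it in the form $\dot x=-\phi(y)-F(x)$, $\dot y=g(x)$ with $g(x)=x$, $\phi(y)=y$ and $F(x)=a(x^{2n}-1)x$, one checks directly that $f(x)/g(x)=a\bigl((2n+1)x^{2n}-1\bigr)/x$ is monotone on each half-line. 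This criterion covers every $n\ge 1$ and in addition asserts stability of the cycle, which handles the stability claim without your characteristic-exponent computation (whose sign you do not actually determine, since $1-(2n+1)y^{2n}$ changes sign along the orbit).

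\textbf{Existence.} Your instinct that the trapping region is the crux is correct, and the paper resolves it exactly as you predict in your last paragraph: rather than building a finite invariant annulus, it carries out the Poincar\'e compactification (Proposition~\ref{prop70}), finds via blow-up that the boundary $\mathbb S^1$ is globally repelling for the relevant sign of $a$, and then applies Poincar\'e--Bendixson on the full Poincar\'e disc. So you need not construct a finite outer barrier at all; the compactification you flagged for the phase portraits already supplies the compact positively invariant set.
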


\begin{theorem} \label{teo2} For the generalized Rayleigh differential system \eqref{eq1} we obtain three non-topologically equivalent phase portraits on the Poincar\'e disc, they are described in Figure \ref{phaseportraits}. The bifurcation set of the phase portraits in the parameter space is $a=0$.
\end{theorem}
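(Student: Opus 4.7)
The plan is to apply the Poincar\'e compactification to system \eqref{eq1}, classify every singular point (both finite and at infinity), and then glue the local pictures together using the information supplied by Theorem~\ref{teo1}. Two symmetries simplify the analysis: the involution $(x,y,t)\mapsto(-x,-y,t)$ leaves the system invariant, so every phase portrait is centrally symmetric with respect to the origin, and the transformation $(x,y,t)\mapsto(x,-y,-t)$ sends the system with parameter $a$ to the system with parameter $-a$, which lets me transfer any analysis done for $a>0$ to the case $a<0$.

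The finite analysis is immediate. The origin is the only equilibrium, with linearization of trace $a$ and determinant $1$. Hence for $a=0$ one has the harmonic oscillator $\dot{x}=y,\dot{y}=-x$, whose orbits are concentric circles filling the plane; for $a\neq 0$ the origin is a focus, stable when $a<0$ and unstable when $a>0$, and by Theorem~\ref{teo1} the plane contains exactly one limit cycle, whose stability is opposite to that of the focus.

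I would then carry out the Poincar\'e compactification in charts $U_1$ and $U_2$, recovering $V_1,V_2$ from the central symmetry. In chart $U_2$ (the $y$-direction at infinity) a direct computation produces, for $a\neq 0$, a hyperbolic node at the origin with both eigenvalues equal to $a$: attracting when $a<0$, repelling when $a>0$. In chart $U_1$ (the $x$-direction at infinity) the only equilibrium is the origin, and it is highly degenerate, since
\[
\dot{u}=-v^{2n}\bigl(1+u^{2}-au\bigr)-a\,u^{2n+1},\qquad \dot{v}=-u\,v^{2n+1},
\]
so its linear and lower-order parts both vanish. I expect this to be the main obstacle of the proof: resolving this nilpotent singularity requires a sequence of (quasi-homogeneous) blow-ups, whose exceptional divisors should contain only elementary hyperbolic or semi-hyperbolic points from which the local sectorial decomposition can be read off, in the spirit of the Li\'enard analysis of \cite{art2,art3}. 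For $a=0$ the natural compactification has no equilibria on the boundary circle, which itself is a periodic orbit of the extended flow.

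Finally I would assemble the global picture for each value of $a$. For $a=0$ the disc is foliated by concentric periodic orbits together with the boundary circle. For $a\neq 0$, the focus at the origin, the unique limit cycle given by Theorem~\ref{teo1}, the nodes in $U_2,V_2$, and the sectorial structure at $U_1,V_1$ together determine the $\alpha$- and $\omega$-limits of every orbit via the Poincar\'e--Bendixson theorem, producing precisely the three portraits of Figure~\ref{phaseportraits}. To conclude that they are pairwise non-equivalent, it is enough to observe that the existence of a limit cycle and its stability are topological invariants of the phase portrait, so the bifurcation set is exactly $\{a=0\}$.
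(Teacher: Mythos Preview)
Your plan mirrors the paper's own approach: classify the finite singularity, compactify and find a hyperbolic node at one pair of antipodal infinite points and a degenerate singularity at the other pair, resolve the latter by successive blow-ups (the paper finds a saddle and a semi-hyperbolic saddle on the exceptional divisor), and then invoke Theorem~\ref{teo1} together with Poincar\'e--Bendixson to assemble the three global pictures. The only differences are cosmetic---the paper works with the equivalent system~\eqref{eq2}, so the roles of $U_1$ and $U_2$ are interchanged, and it records that for $|a|\ge 2$ the origin is a node rather than a focus, which is irrelevant for the topological classification.
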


\begin{figure}
    \centering
    \includegraphics[scale=0.8]{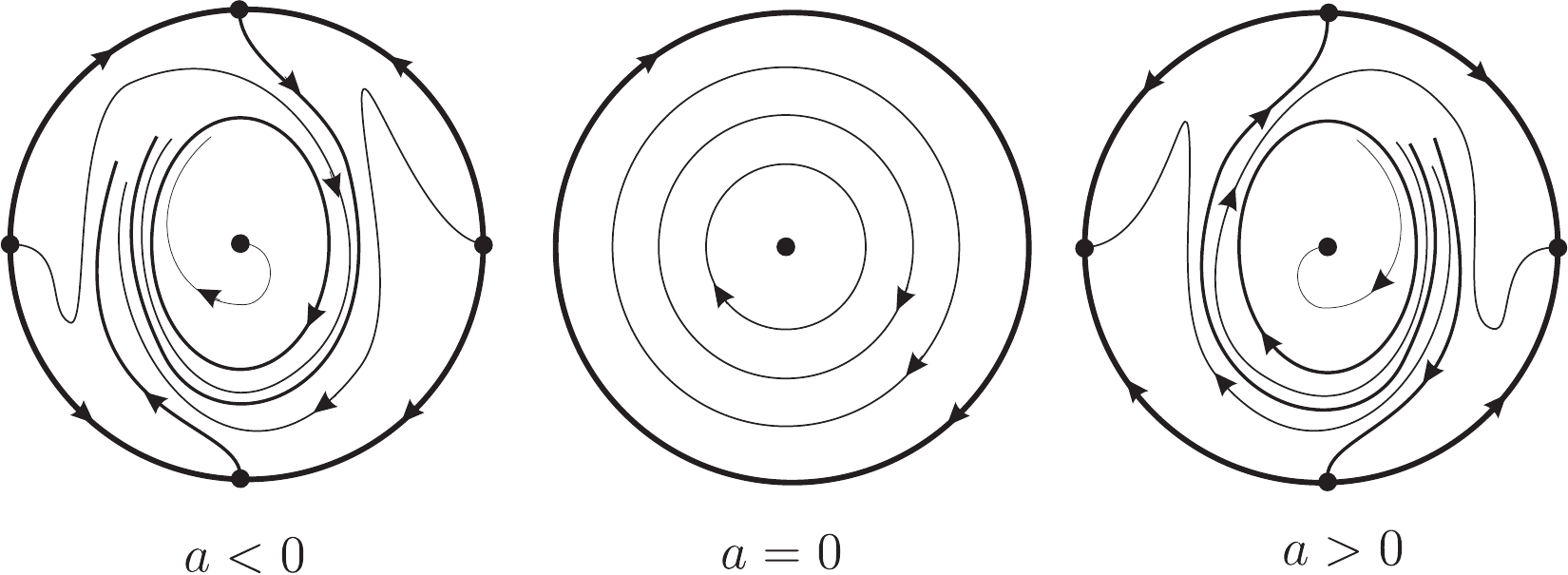}
    \caption{Phase portraits of the generalized Rayleigh systems in the Poincar\'e disc.}
    \label{phaseportraits}
\end{figure}

\section{Preliminary results} \label{preliminar}

\subsection{Basic concepts} 
Let $\mathbb{R}[x,y]$ be the set of all polynomials in the variables $x$ and $y$ with real coefficients. Associated to the polynomial differential system
\begin{equation}
 \begin{array}{cc}
        \dot x = P(x,y), &  \\
         \dot y = Q(x,y), & 
    \end{array}
\end{equation}
where $P,Q\in \mathbb{R}[x,y]$, it is the vector field $\mathcal{X}(x,y)=P \frac{\partial}{\partial x} + Q \frac{\partial}{\partial y}$ or, simply $\mathcal {X} = (P,Q)$ in $\mathbb{R}^2$. The degree of the vector field $\mathcal{X}$ is $n=\max(\deg{P},\deg{Q})$. 

The orbit $\gamma_p$ of the vector field $\mathcal{X}$ through the point $p$ is the image of the maximal solution $\phi_p: I_p \in \mathbb R \to \mathbb R^2$ endowed with an orientation, moreover, only one of the following statements holds.
\begin{enumerate}
    \item $\phi_p$ is a injection onto its image (i.e. the orbit is in correspondence with an interval of $\mathbb R$;
    \item $I_p=\mathbb{R}$ and $\phi_p$ is constant (i.e. the orbit is a point);
    \item $I_p=\mathbb{R}$ and $\phi_p$ is a periodic function, i.e, there is a constant $\tau$ such that $\phi_p(t+\tau)=\phi_p(t)$, for any $t$ and $\phi_p$ is non constant (in this case the orbit is diffeomorphic to a circle).
\end{enumerate}

\begin{definition} 
1. A singular point (or a singularity) of the vector field $X=(P,Q)$ is a point $p\in \mathbb{R}^2$ such that $\phi_p$ is constant i.e. $P(p)=Q(p)=0$. The points of $\mathbb R^2$ which are not singular are called regular points.

2. A periodic orbit of the vector field $X=(P,Q)$ is an orbit diffeomorphic to a circle.
\end{definition}

An isolated periodic orbit in the set of all periodic orbits of a planar differential system is a {\it limit cycle}. In recent years a variety of methods were used to investigate the existence of limit cycles. In this paper we shall see that the Poincar\'e-Bendixson Theorem could be an useful tool in such investigation. 

Let $p$ be a singular point of the vector field $\mathcal{X}=(P,Q)$. Denote by $\Delta(p) =P_x(p)Q_y(p)-P_y(p)Q_x(p)$ and $\Gamma =P_x(p)+Q_y(p)$, where $F_a$ denotes the derivative of the function $F(a,b)$, $a,b\in \mathbb{R}$, with respect to the variable $a$. 

\begin{definition}
An isolated singular point $p$ is said to be non-degenerate if $\Delta(p) \neq0$.
\end{definition}

It is known that $p$ is a \textit{saddle} if $\Delta<0$, a \textit{node} if $\Gamma^2-4\Delta>0$ (stable if $\Gamma <0$, unstable if $\Gamma >0$), a \textit{focus} if $\Gamma^2-4\Delta <0$ (stable if $\Gamma <0$, unstable if $\Gamma >0$), and either a \textit{weak focus or a center} if $\Gamma=0$ and $\Delta>0$ (see Theorem 2.15 of \cite{livro1} for more details about non-degenerate singular points).

\begin{definition}
An isolated singular point $p$ of $\mathcal {X}=(P,Q)$ is a hyperbolic singular point if the eigenvalues of the Jacobian matrix $J\mathcal {X}(p)$, defined by the first derivative of $P$ and $Q$ with respect to the variables $x$ and $y$, have both nonzero real part.
\end{definition}

We note that the hyperbolic singular points are non-degenerate singular points except by the weak foci and centers.

\begin{definition}
If $p$ is an isolated singular point of the vector field $\mathcal {X}$ such that $\Delta(p)=0$ and $\Gamma \neq 0$ then $p$ is called a semi-hyperbolic singular point.
\end{definition}

The next result describes the local phase portraits of the semi-hyperbolic singular points. For more details about the next proposition see Theorem 2.19 of \cite{livro1}.

\begin{proposition} \label{semihyperbolic}
Let $(0,0)$ be an isolated singular point of the vector field $\mathcal{X}$ given by
\begin{equation*}
\begin{array}{l}
         \dot x = A(x,y), \\
         \dot y = \lambda y+ B(x,y),
    \end{array}
\end{equation*}
where $A$ and $B$ are analytic in a neighborhood of the origin starting with, at least, degree $2$, in the variables $x$ and $y$. Let $y=f(x)$ be the solution of the equation $\lambda y+B(x,y)=0$ in a neighborhood of the point $(0,0)$, and suppose that the function $g(x)=A(x,f(x))$ has the expression $g(x)=ax^{\alpha}+o(x^{\alpha})$, where $\alpha \geq 2$ and $a\neq 0$. So, when $\alpha$ is odd, $(0,0)$ is either an unstable node or saddle, depending if $a>0$ or $a<0$, respectively. In the saddle's case, the separatices are tangent to the $x$-axis. If $\alpha$ is even, the $(0,0)$ is a saddle-node, i.e., the singular point is formed by the union of two hyperbolic sectors with one parabolic sector. The stable separatrix is tangent to the positive (respectively, the negative) $x$-axis at $(0,0)$, according to $a<0$ (respectively, $a>0$). The two unstable separatices are tangent to the $y$-axis at $(0,0)$.
\end{proposition}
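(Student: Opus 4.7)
The plan is to exploit the spectral splitting at $(0,0)$: the linear part has eigenvalues $0$ and $\lambda\ne 0$, with eigenspaces the $x$-axis and the $y$-axis respectively. By the Hadamard--Perron / center-manifold theorem there is, for every $k\geq 1$, a local $C^k$ invariant curve $y=\varphi(x)$ tangent to the $x$-axis at the origin, along which the dynamics reduce to a scalar equation $\dot x=h(x)$; transverse to it the flow is driven by the hyperbolic factor $e^{\lambda t}$. The classification therefore reduces to identifying the leading term of $h$ and then gluing the one-dimensional flow to the transverse exponential dynamics.

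To identify the scalar dynamics I would introduce the curve $y=f(x)$ of the statement via the analytic implicit function theorem applied to $\lambda y+B(x,y)=0$: since $\p_y(\lambda y+B)|_{(0,0)}=\lambda\ne 0$ and $B$ is of order $\geq 2$, this produces a unique analytic $f$ with $f(0)=f'(0)=0$. The key claim is that $f$ and the true $\varphi$ share the same Taylor jet at $0$ to any prescribed order, so that $g(x)=A(x,f(x))$ and $h(x)=A(x,\varphi(x))$ have identical leading behavior. One proves this inductively by comparing the defining relation $\lambda f+B(x,f)=0$ with the invariance relation $\varphi'(x)\,A(x,\varphi(x))=\lambda\varphi(x)+B(x,\varphi(x))$ and repeatedly using $A,B=O(|(x,y)|^2)$: each pass raises the vanishing order of $\varphi-f$ at $0$. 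Consequently $h(x)=ax^{\alpha}+o(x^{\alpha})$ with $a\ne 0$.

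The classification is then case by case; I describe it for $\lambda>0$, the case $\lambda<0$ being analogous with stability reversed. If $\alpha$ is odd, $g(x)$ has the same sign as $ax$: when $a>0$ the origin is a source on the invariant curve, which combined with the unstable transverse direction gives an unstable topological node; when $a<0$ the origin is a sink on the invariant curve, and glued to the hyperbolic $y$-axis one obtains a topological saddle whose stable separatrices lie on the invariant curve, hence tangent to the $x$-axis. If $\alpha$ is even, $g(x)$ keeps a single sign on each side of $0$, so one half of the invariant curve is attracted to the origin while the other is repelled; combined with the transverse exponential flow this produces a saddle-node with two hyperbolic sectors and one parabolic sector. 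The stable separatrix lies on the half $x$-axis on which $g$ points back to $0$, namely the positive half when $a<0$ and the negative half when $a>0$, matching the statement; the two unstable separatrices come from the hyperbolic $y$-direction and are therefore tangent to the $y$-axis. Each case is finalized by a standard flow-box argument on thin rectangles around the invariant curve.

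The main obstacle is that $y=f(x)$ is merely the locus $\dot y=0$ and is not in general invariant, while the genuine center manifold is only $C^\infty$ even for an analytic vector field. Showing that $g(x)=A(x,f(x))$ nonetheless captures the correct jet of the dynamics on $\varphi$ is the heart of the argument, and is precisely what the formal Taylor comparison above is designed to achieve; once this is granted, the remaining step---gluing the scalar flow to the transverse hyperbolic flow---is routine.
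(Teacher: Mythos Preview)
The paper does not actually prove this proposition: it is stated as a preliminary result and the reader is referred to Theorem~2.19 of \cite{livro1} for details. So there is no ``paper's own proof'' to compare against.

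That said, your approach via the center manifold is exactly the standard route used in the reference and it is correct in outline. The spectral splitting into a hyperbolic direction (eigenvalue~$\lambda$) and a center direction (eigenvalue~$0$) is the right organizing principle; the reduction to the scalar equation on the invariant curve and the case analysis on the parity of~$\alpha$ and the sign of~$a$ are standard and accurate. Your identification of the key technical point---that the curve $y=f(x)$ (the locus $\dot y=0$) is not invariant, yet shares its full Taylor jet with any center manifold $y=\varphi(x)$, so that $g(x)=A(x,f(x))$ has the same leading term as the true reduced vector field---is precisely the substance of the proof in the cited reference, and your inductive comparison of $\lambda f+B(x,f)=0$ with the invariance relation for~$\varphi$ is the right mechanism. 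One small remark: the proposition as stated implicitly assumes $\lambda>0$ (note the conclusions speak of an \emph{unstable} node and of the two \emph{unstable} separatrices being tangent to the $y$-axis), so your focus on that case is appropriate and the parenthetical about $\lambda<0$ is not strictly needed here.
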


\subsection{Limit sets} Let $\phi_p(t)=(x(t), y(t))$ be the maximal solution of a planar differential system defined on the interval $(\alpha_p, \omega_p)$ such that
$\phi_p(0)=p$. If $\omega_p = \infty$ we define the $\omega$--limit
set of $p$ as
\begin{equation*}
\omega(p)=\{q \in \mathbb R^2:\exists\{t_n\} \mbox{
with } t_n \to \infty \mbox{ and } \phi_p(t_n)\to q \mbox{ when } n
\to \infty \}.
\end{equation*}
 
Analogously, if $\alpha_p = - \infty$ we define the
$\alpha$--limit set of $p$ as
\begin{equation*}
\alpha(p)=\{q \in \mathbb R^2:\exists \{t_n\} \mbox{
with } t_n \to - \infty \mbox{ and } \phi_p(t_n)\to q \mbox{ when }
n \to \infty \}.
\end{equation*}

In what follows we denote by $\gamma_p^{+}$ the positive semi-orbit of a differential vector field $\mathcal{X}$ passing through  the point $p$. The next result is known as Poincar\'e-Bendixson Theorem and it can be used to guarantee the existence of periodic orbits in compact sets, for a proof see Theorem 1.25 of \cite{livro1}.

\begin{theorem}\label{bendixson} Let $\phi_p(t)$ be the solution of the vector field $\mathcal{X}$ defined for all $t\geq 0$ and such that $\gamma_p^+= \{\phi_p(t) : t\ge 0\}$ is contained in a compact set $K$. Assume that the vector field $\mathcal {X}$ has at most a finite number of singular points in $K$. Then one of the following statements holds.
\begin{enumerate}
    \item If $\omega(p)$ contains only regular points then $\omega(p)$ is a periodic orbit.
    \item If $\omega(p)$ contains both regular and singular points, then $\omega(p)$ is formed by a set of orbits, every one of which tends to one singular point in $\omega(p)$ when $t \to \pm \infty$.
    \item If $\omega(p)$ does not contain regular points, then $\omega(p)$ is a unique singular point.
\end{enumerate}
\end{theorem}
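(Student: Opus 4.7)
The plan is to establish the three cases by first recording standard properties of the $\omega$-limit set and then exploiting a single technical device: monotonicity of consecutive intersections of an orbit with a local transverse section.

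First I would verify that, since $\gamma_p^+\subset K$, the set $\omega(p)$ is nonempty, compact, connected, and invariant under the flow. Case (3) is then immediate: if $\omega(p)$ contains no regular points it is a subset of the finite set of singularities in $K$, and a finite connected set is a single point.

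For Case (1), pick a regular point $q\in\omega(p)$; invariance gives $\gamma_q\subset\omega(p)$, so $\omega(q)\subset\omega(p)$ is nonempty and, by hypothesis, contains some regular point $r$. Using the flow-box theorem I construct a local transverse section $\Sigma$ through $r$. The technical heart of the argument is the \emph{monotonicity lemma}: the successive intersection points of any positive semi-orbit with $\Sigma$ form a monotone sequence along $\Sigma$. Applied to $\gamma_q^+$ (whose consecutive crossings of $\Sigma$ must accumulate at $r$), and combined with the observation that every point of $\gamma_q\cap\Sigma$ is a limit of points of $\gamma_p^+\cap\Sigma$, this forces $\gamma_q$ to meet $\Sigma$ in a single point, so $\gamma_q$ is a periodic orbit. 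A further flow-box argument then shows that $\gamma_q$ is both open and closed in $\omega(p)$, hence by connectedness $\omega(p)=\gamma_q$.

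Case (2) follows by bootstrapping. Take a regular $q\in\omega(p)$ and consider $\alpha(q),\omega(q)\subset\omega(p)$. If either contained a regular point, the argument of Case (1) would force $\gamma_q$ to be a periodic orbit and $\omega(p)=\gamma_q$ to consist only of regular points, contradicting the standing hypothesis of Case (2). Hence $\alpha(q)$ and $\omega(q)$ consist only of singularities, and by connectedness together with the finiteness assumption each reduces to a single singular point, giving the required structure. The main obstacle is the monotonicity lemma itself, whose proof relies on the Jordan curve theorem: the arc of $\gamma_q$ between two consecutive intersections with $\Sigma$, together with the piece of $\Sigma$ joining them, bounds a topological disc on which the flow crosses the boundary in only one direction, which traps subsequent crossings and forces them to proceed monotonically along $\Sigma$.
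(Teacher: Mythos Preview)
Your outline is the classical argument (flow-box theorem, monotonicity of successive crossings via the Jordan curve theorem, then connectedness), and it is correct as a proof of the Poincar\'e--Bendixson theorem as stated. There is nothing to compare against, however: the paper does not supply its own proof of this result but simply cites Theorem~1.25 of \cite{livro1}, whose argument is in fact the same standard one you sketch.
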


\subsection{Poincar\'e compactification} In this section we describe the behavior of the orbits of the planar vector field studied in the so called Poincaré disc. Roughly speaking, the Poincar\'e disc is the unitary and closed disc centered at the origin of the coordinates. The interior of such disc is identified with the plane $\mathbb R^2$, the circle $\mathbb S^1$, the boundary of the disc, is identified with the infinity of the plane. Consequently, in $\mathbb R^2$ we can go to infinity in as many directions as points in  $\mathbb S^1$. See Chapter 5 of \cite{livro1} for details about the Poincar\'e compactification.

Identifying the plane $\mathbb{R}^2$ with the plane of $\mathbb{R}^3$ of the form $(y_1,y_2,y_3)=(x_1,x_2,1)$, consider the sphere $\mathbb{S}^2=\{y\in \mathbb{R}^3:y_1^2+y_2^2+y_3^2=1\}$, called the Poincaré sphere. $\mathbb S^2$ is tangent to $\mathbb{R}^2$ in the point $(0,0,1)$. The sphere $\mathbb S^2$ can be divided in three pieces: $H_+$, $H_-$ and $S^1$, where $H_+=\{y\in \mathbb{S}^2:y_3>0\}$ the upper hemisphere, $H_-=\{y\in \mathbb{S}^2:y_3< 0\}$ is the lower hemisphere and  $\mathbb S^1=\{y\in \mathbb{S}^2:y_3=0\}$ is the equator of the sphere $\mathbb S^2$. 
 
Consider the central projection of the vector field $\mathcal{X}$ defined in $\mathbb{R}^2$ over $\mathbb{S}^2$ given by the central projections $f^+:\mathbb{R}^2\rightarrow \mathbb{S}^2$ and $f^-: \mathbb{R}^2\rightarrow \mathbb{S}^2$. More precisely, $f^+$ (respectively, $f^-$) is the intersection of the straight line that passes through the point $y$ and the origin with the upper hemisphere  (respectively, lower) of the sphere $\mathbb{S}^2$. The central projections can be written as
\begin{equation*}
     f^+(x)=\left(\frac{x_1}{\Delta(x)},\frac{x_2}{\Delta(x)},\frac{1}{\Delta(x)}\right),\qquad
     f^-(x)=\left(-\frac{x_1}{\Delta(x)},-\frac{x_2}{\Delta(x)},-\frac{1}{\Delta(x)}\right),
\end{equation*} 
where $\Delta(x)=\sqrt{x_1^2+x_2^2+1}$.

From the central projections we obtain vector fields induced in the upper and lower hemisphere of the sphere. The vector field induced in $H_+$ is defined by $$\overline{X}(y)=Df^+(x)X(x),$$ where $y=f^+(x)$, and in $H_-$ is given by $\overline{X}(y)=Df^-(x)X(x)$, where $y=f^-(x)$. Notice that $\overline{X}$ is a vector field in $\mathbb{S}^2{\setminus}\mathbb{S}^1$ that is tangent to $\mathbb{S}^2$ in each point.

Multiply the induced vector field $\overline{X}$ by the factor $\rho(x)=y_3^{d-1}$, with $d$ equal to degree of the polynomial vector field$X$, the obtained vector field is a vector field defined in whole sphere $\mathbb{S}^2$ and it is known as the Poincaré compactification of the vector field $\mathcal{X}$ in $\mathbb{S}^2$. We denote it by $p(\mathcal{X})$. 

In order to describe the analytic expression of the vector field $p(\mathcal{X})$ we consider the local charts 
$U_k=\{y\in \mathbb{S}^2:y_k>0\}$, $V_k=\{y\in\mathbb{S}^2:y_k<0\},$
for $k=1,2,3$ and the corresponding local maps $\phi_k:U_k\rightarrow \mathbb{R}^2$ and $\psi_k:V_k\rightarrow \mathbb{R}^2$ defined by $\phi_k(y)=\psi_k(y)=(y_m/y_k,y_n/y_k),$ for $m<n$ and $m,n\neq k$. Denote $\phi_k(y)= \psi_k(y)= (u,v)$ for each $k$, this implies that $(u,v)$ has diferent roles depending on each local chart. Figure \ref{fig50} is a geometric representation of the local coordinates of $(u,v)$ in each local chart. We observe that the points of $\mathbb{S}^1$ in each local chart have their coordinate $v=0$.
\begin{figure}
    \centering
    \includegraphics[scale=1]{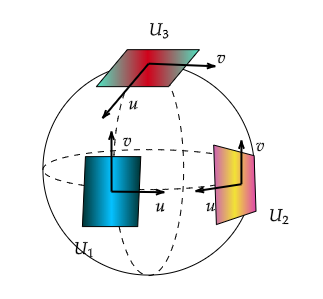}
    \caption{The local charts $(U_k,\phi_k)$ with $k=1,2,3$ for the Poincaré sphere.}
    \label{fig50}
\end{figure}
Consequently, if $\mathcal{X}(x)=(P(x_1,x_2),Q(x_1,x_2))$, then $\overline{X}(y)=Df^+(x)X(x)$ with $y=f^+(x)$ and 
$$
D\phi_1(y)\overline{X}(y)=D\phi_1(y)\circ Df^-(x)X(x)=D(\phi_1\circ f^+)(x)X(x).
$$
Then
$
(\phi\circ f^+)(x)=\left(\dfrac{x_2}{x_1},\dfrac{1}{x_1}\right)=(u,v),
$
and
$$
\overline{X}|_{U_1}:= D\phi_1(y)\overline{X}(y)=\left(\begin{array}{cc}
   \displaystyle {x_2}/{x_1} & \displaystyle {1}/{x_1} \vspace*{0.2cm} \\
  \displaystyle  -1/{x_1^2} & 0
\end{array}\right) \left(\begin{array}{c}
     P(x_1,x_2) \\
     Q(x_1,x_2)
\end{array}\right),
$$
$$
=\displaystyle{\frac{1}{x_1^2}(-x_2P(x_1,x_2)+Q(x_1,x_2),-P(x_1,x_2))},
$$
$$
=\displaystyle{v^2\left(-\frac{u}{v}P\left(\frac 1v,\frac uv\right)+\frac 1v Q\left(\frac 1v,\frac uv\right),-P\left(\frac 1v,\frac uv\right)\right)}.
$$
Moreover, if $m(z)=(1+u^2+v^2)^{\frac{1-d}{2}}$, as
$
\rho(y)=y_3^{d-1}=\dfrac{1}{\Delta(x)^{d-1}}=\dfrac{v^{d-1}}{\Delta(z)^{d-1}}=v^{d-1}m(z),
$
we have
$$
\rho(\overline{X}|_{U_1})(z)=v^{d+1}m(z)\left(-\frac{u}{v}P\left(\frac 1v,\frac uv\right)+\frac 1v Q\left(\frac 1v,\frac uv\right),-P\left(\frac 1v,\frac uv\right)\right).
$$
After a reparametrization of the time we eliminate the factor $m(z)$ from the compactified vector field and the analytic expression of the vector field $p(X)$ in the local chart $(U_1,\phi_1)$ is  
\begin{align} 
    \nonumber & \dot u = v^d\left( -uP\left(\frac{1}{v},\frac{u}{v}\right) +Q\left(\frac{1}{v},\frac{u}{v}\right)\right),\qquad
     \dot v = -v^{d+1}P\left(\frac{1}{v},\frac{u}{v}\right).
\end{align}

Analogously, in the local chart $(U_2,\phi_2)$ we have
\begin{align} 
    \nonumber & \dot u= v^d\left(P\left(\frac{u}{v},\frac{1}{v}\right) -uQ\left(\frac{u}{v},\frac{1}{v}\right)\right),\qquad \dot v= -v^{d+1}Q\left(\frac{u}{v},\frac{1}{v}\right),
\end{align}
and, in the chart $(U_3,\phi_3)$ 
\begin{align}\label{eq302}
   \nonumber & \dot u=P(u,v),\qquad \dot v= Q(u,v).
\end{align}

We remark that the expression of the vector field $p(X)$ in the remaning local charts $(V_k,\psi_k)$ is the same as $(U_k,\phi_k)$ except by multiplication by the factor $(-1)^{d-1}$, for $k=1,2,3$. 

The Poincaré disc $\mathbb D$ is the projection of the closed hemisphere $H^+$ under $(y_1,y_2,y_3) \to (y_1,y_2)$. In the Poincar\'e disc the \textit{finite singular points} (respectively, \textit{infinite}) of $X$ or $p(X)$ are the singular points of $p(X)$ that are in $\mathbb S^2\setminus \mathbb S^1$ (respectively, $\mathbb{S}^1$). It is important to remark that, if $y\in \mathbb{S}^1$ is an infinite singular point, then $-y$ is also an  infinite singular point and that the local phase portrait of $-y$ is the local phase portrait of $y$ multiplied by $(-1)^{d-1}$, it follows that the orientation of the orbits changes when the degree is even. Due to the fact that infinite singular points appear in pair of points diametrally opposite, it is enough to study the local phase portrait of only half of the infinite points, and using the degree of the vector field, it is possible to determine the other half. 

Finally we introduce the concept of topologically equivalent vector fields. Let ${\mathcal{X}}_1$ and ${\mathcal{X}}_2$ be two polynomial
vector fields on $\mathbb{R}^2$, and let $p({\mathcal{X}}_1)$ and $p({\mathcal{X}}_2)$ be their respective polynomial vector fields on the 
Poincar\'e disc $\mathbb{D}$. We say that they are \textit{topologically equivalent} if there exists a homeomorphism on the Poincar\'e disc $\mathbb{D}$ which preserves the infinity ${\mathbb S}^{1}$ and sends the orbits of $p({\mathcal{X}}_1)$  to orbits of $p({\mathcal{X}}_2)$, {\it preserving the orientation of all the orbits}.

\subsection{Li\'enard systems and limit cycles} 
One important kind of differential systems in the context of this paper are the so called Li\'enard systems, that is a differential equation of the form
\begin{equation*}
    \frac{d^2x}{dt^2}+f(x)\frac{dx}{dt}+g(x)=0,
\end{equation*}
where $f,g\in C^1(\mathbb{R})$. Applying the change of coordinate  $y=\dot{x} + F(x)$, where $F(x)= \int_0^x f(s) ds$, the Li\'enard equation is equivalent to the planar system
\begin{equation}\label{eq101a}
    \frac{dx}{dt}=-\phi(y)-F(x),\qquad \frac{dy}{dt}=g(x).
\end{equation}
We denote by $Lip$ the set of functions $f: I\subset \mathbb R \to \mathbb R$, such $f$ is a Lipschitz continuous function, that is, for each $f \in Lip$ there is a real constant $M$ such as $$ ||f(x) - f(y)|| \leq M ||x -y||,$$ for all $x,y \in I$, where $I$ is an interval of $\mathbb R$.

The next theorem was proved in \cite{1} and it will be one of the fundamental tools in the proof of existence and uniqueness of the limit cycles for the generalized Rayleigh systems.
 
 \begin{theorem}\label{teo101}
If the following conditions are satisfied for system \eqref{eq101a}
 \begin{enumerate}
     \item $g(x)\in$  Lip in any finite interval; $xg(x)>0, \text{ }x\neq 0;\text{ } G(-\infty)=G(+\infty)$, where $G(x)=\int_{0}^{x}g(s)ds$,
     \item $F'(x)=f(x)\in C^0(-\infty,+\infty);\text{ } F(0)=0;\text{ } \displaystyle\frac{f(x)}{g(x)}$ is non-increasing when $x$ grows in $(-\infty,0)$ and $(0,+\infty);\text{ }\displaystyle \frac{f(x)}{g(x)}$ is non-constant when  $0<|x|<<1,$
     \item $\phi(y)\in$ Lip in any finite interval, $y\phi(y)> 0,\text{ }y\neq 0, \text{ } \phi(y)$ is non-decreasing, $\phi(y)$ has left and right-derivatives, $\phi'_+(0)$ and $\phi'_-(0)$ in the point $y=0$, $\phi'_+(0).\phi'_-(0)\neq 0$ when $f(0)=0$.
    
 \end{enumerate}
then it has at most one limit cycle. If such limit cycle exists then it is stable.

\end{theorem}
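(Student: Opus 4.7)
The plan is to adapt the classical Zhang-type uniqueness argument for generalized Liénard systems: extract from an energy function a necessary integral identity that every closed orbit of \eqref{eq101a} must satisfy, express its characteristic exponent as a second integral, and then exploit the monotonicity of $f(x)/g(x)$ to show that these two integrals are incompatible on two distinct nested closed orbits.

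First, conditions (1)--(3) together with $F(0)=0$ force the origin to be the only equilibrium, so every closed orbit $\Gamma$ encircles it; since $\dot y=g(x)$ has the sign of $x$, $\Gamma$ meets the $y$-axis transversely at two points $(0,\beta)$ and $(0,\alpha)$ with $\beta<0<\alpha$, and splits into two arcs that are graphs $x=x^\pm(y)$ on $[\beta,\alpha]$, with $x^+(y)>0>x^-(y)$ in the interior. Setting $\Phi(y)=\int_0^y\phi(s)\,ds$ and $H(x,y)=G(x)+\Phi(y)$, differentiation along orbits gives $\dot H=-F(x)g(x)$, so using $dy=g(x)\,dt$ every closed orbit satisfies
\begin{equation*}
\int_\beta^\alpha\bigl[F(x^+(y))-F(x^-(y))\bigr]\,dy=0.
\end{equation*}
The divergence of the vector field is $-f(x)$, so $\Gamma$ is stable, semi-stable or unstable according to the sign of
\begin{equation*}
I(\Gamma):=\oint_\Gamma\frac{f(x)}{g(x)}\,dy=\int_\beta^\alpha\left[\frac{f(x^+(y))}{g(x^+(y))}-\frac{f(x^-(y))}{g(x^-(y))}\right]dy.
\end{equation*}

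Assuming by contradiction that two closed orbits $\Gamma_1\subset\Int(\Gamma_2)$ exist, non-crossing orbits give $x_2^-(y)<x_1^-(y)<0<x_1^+(y)<x_2^+(y)$ throughout the common $y$-interval. I would then apply the Filippov change of variables $u=G(x)$, which replaces each branch $x^\pm$ by a function of a common variable $u\geq 0$ and splits $F(x)$ and $f(x)/g(x)$ into functions $F_\pm(u)$ and $h_\pm(u)$ on the two half-axes. The identity $\oint F\,dy=0$ for each $\Gamma_i$ becomes an equality between two explicit $u$-integrals over a common interval, and the monotonicity together with the non-constancy of $f/g$ near the origin (condition (2)) translate into a strict inequality which, combined with these two identities, forbids both orbits from simultaneously satisfying $\oint F\,dy=0$---the required contradiction. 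Applied to the unique limit cycle $\Gamma$, together with a comparison with orbits near the origin (where condition (3) on $\phi'_\pm(0)$ controls the local flow when $f(0)=0$), the same estimate forces $I(\Gamma)>0$, i.e.\ stability.

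The main obstacle is precisely the comparison step: pointwise monotonicity of $f/g$ by itself only yields $I(\Gamma_2)\leq I(\Gamma_1)$, which is a priori consistent with nested limit cycles of alternating stability, so it does not rule out coexistence. The delicate part is to couple the monotonicity to the two vanishing identities $\oint_{\Gamma_i}F\,dy=0$ via the Filippov variable $u=G(x)$, pairing the $\pm$ branches so that these identities absorb the boundary contributions and the strict non-constancy of $f/g$ near $0$ produces a definite-sign integrand, thereby promoting the weak inequality to one that outright precludes the existence of a second closed orbit.
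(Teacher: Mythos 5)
You should first be aware that the paper does not prove this statement at all: Theorem \ref{teo101} is quoted verbatim from the reference \cite{1} (it is the classical Zhang Zhifen uniqueness theorem for generalized Li\'enard systems) and is used as a black box. So there is no in-paper proof to match; your attempt has to be judged on its own. Your setup is correct and is indeed the standard skeleton of Zhang's argument: the energy function $H=G(x)+\Phi(y)$ gives $\dot H=-F(x)g(x)$ and hence the necessary condition $\oint_\Gamma F(x)\,dy=0$; the divergence is $-f(x)$, so the characteristic exponent is $-\oint_\Gamma (f/g)\,dy$; and the goal is to compare these quantities on two nested cycles using the monotonicity of $f/g$.

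The gap is that the decisive step is never carried out, and you say so yourself: you correctly observe that pointwise monotonicity only gives the weak inequality $I(\Gamma_2)\le I(\Gamma_1)$ over the common $y$-interval, which is compatible with nested cycles of alternating stability, and then you defer the resolution to an unspecified ``coupling'' of the monotonicity with the two identities $\oint_{\Gamma_i}F\,dy=0$ via the Filippov variable $u=G(x)$. That coupling \emph{is} the proof; without it nothing is established. Two concrete pieces are missing. First, the outer cycle $\Gamma_2$ has arcs lying outside the $y$-range $[\beta_1,\alpha_1]$ of $\Gamma_1$, and on those arcs the branch-by-branch comparison in the variable $y$ is unavailable; in Zhang's argument these arcs are handled by a separate estimate that uses the monotonicity and sign condition on $\phi$ (condition (3)) together with the relation $\dot x=-\phi(y)-F(x)$, and this is exactly where the hypothesis on $\phi$ enters --- your sketch never uses it except for a vague remark about the local flow near the origin. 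Second, the strict inequality must come from the non-constancy of $f/g$ near $x=0$ combined with a pairing of level sets of $G$ on the two sides of the $y$-axis (this is what $G(-\infty)=G(+\infty)$ and the Filippov substitution are for), and you do not exhibit the definite-sign integrand you promise. As written, the proposal is an accurate table of contents for the known proof rather than a proof. Separately, you might note for the authors that the monotonicity hypothesis as stated in the theorem (``non-increasing'') is the opposite of what is verified later in the paper (``non-decreasing''), an inconsistency in the source that any self-contained proof would have to resolve by fixing the sign conventions.
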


\section{Proof of Theorem \ref{teo1}} 

In this section we investigate the local behavior of the Rayleigh system at infinity, i.e. near $\mathbb S^1$, the boundary of the Poincaré disc.

\begin{proposition}\label{prop70} The local phase portraits at the infinite singular points of system \eqref{eq2} in the Poincaré disc is topologically equivalent to the one described in Figure \ref{figurax}.
\end{proposition}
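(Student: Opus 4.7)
The plan is to compute the Poincar\'e compactification $p(\mathcal{X})$ of system \eqref{eq2} in the local charts $(U_1,\phi_1)$ and $(U_2,\phi_2)$, identify the infinite singular points (those with $v=0$), and classify them with the tools of Section \ref{preliminar}. Since \eqref{eq2} has odd degree $d=2n+1$, the factor $(-1)^{d-1}=1$ forces the local phase portraits at diametrically opposite points of $\mathbb{S}^1$ to coincide, so it suffices to work in $U_1$ and $U_2$.

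In the chart $U_1$, substituting $P(x,y)=y+a(x^{2n}-1)x$ and $Q(x,y)=-x$ into the compactification formulas of Section \ref{preliminar} gives
\begin{equation*}
\dot u = -au - v^{2n} + au v^{2n} - u^2 v^{2n}, \qquad \dot v = -av - u v^{2n+1} + a v^{2n+1}.
\end{equation*}
The unique root on the line $\{v=0\}$ is the origin, and because every monomial that contains $v$ carries at least a factor $v^{2n}$ with $2n\ge 2$, the Jacobian at $(0,0)$ equals $-a\,\Id$. Hence this point (and, by the antipodal symmetry above, its diametrically opposite twin) is a stable node when $a>0$ and an unstable node when $a<0$, which takes care of the two $x$-axis endpoints at infinity.

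In the chart $U_2$ the analogous computation yields
\begin{equation*}
\dot u = v^{2n}(1 - au + u^2) + a u^{2n+1}, \qquad \dot v = u v^{2n+1},
\end{equation*}
whose only infinite singular point is again the origin, but now the whole linear part vanishes; the singularity is highly degenerate and none of the criteria of Section \ref{preliminar} (non-degenerate and semi-hyperbolic) applies directly. This is the main obstacle of the proof. I would desingularize by the directional blow-up $(u,v)=(u,uw)$, followed by a time rescaling $dt\mapsto u^{2n-1}\,dt$, producing an analytic field whose behaviour along the exceptional divisor $\{u=0\}$ can be analyzed by the same machinery as before; if the resulting singular point is still nilpotent I would iterate the procedure, or instead use the quasi-homogeneous blow-up with weights $(2n+1,2n)$ prescribed by the lower-left edge of the Newton polygon of the original field at the origin, which has slope $-2n/(2n+1)$. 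The complementary blow-up $(u,v)=(vs,v)$ can be checked to produce no singular points on its exceptional divisor, confirming that no separatrix tangent to the horizontal direction needs to be tracked. Reading the resulting separatrix structure back through the blow-down maps gives the local phase portrait at the $y$-axis endpoint, and the antipodal symmetry finishes the classification at its opposite twin, yielding the configuration of $\mathbb{S}^1$ displayed in Figure \ref{figurax}.
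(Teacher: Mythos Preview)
Your proposal follows essentially the same route as the paper: the $U_1$ analysis is identical (origin is a node with linearization $-a\,\Id$), and in $U_2$ the paper also performs the vertical blow-up $(u,v)=(x,zx)$ with time rescaling by $x^{2n-1}$, then---since the origin is still degenerate---a weighted change $x=w^{2n}r,\ z=w$ (equivalent to the quasi-homogeneous blow-up you anticipate), finding on the exceptional divisor a hyperbolic saddle at $(0,0)$ and a semi-hyperbolic saddle at $(-1/a,0)$, which after blow-down yields the sectorial picture of Figure~\ref{figurax}. The only substantive omission in your write-up is that you stop at describing the blow-up strategy without carrying it out; to complete the argument you must actually locate and classify those two singular points (the second via Proposition~\ref{semihyperbolic}) and then read off the separatrix configuration through the blow-down.
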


 \begin{figure}
    \centering
    \includegraphics[scale=0.4]{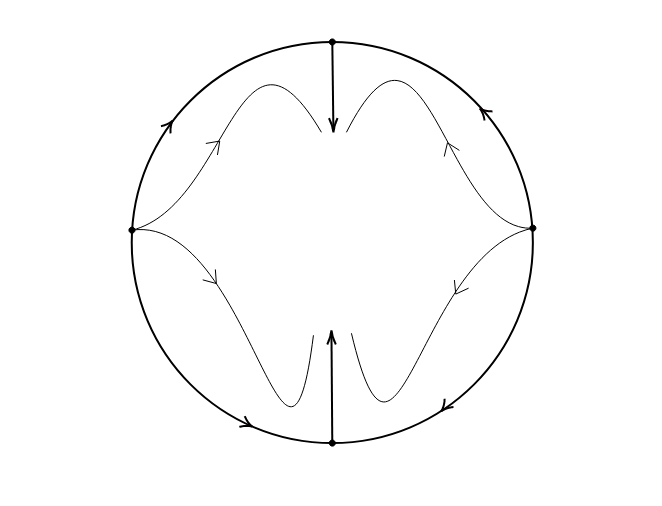}
    \caption{Case $a<0$.}
    \label{figurax}
\end{figure}

\begin{proof}
The Poincaré compactification of system \eqref{eq2} in the local chart $U_1$ is 
\begin{equation*}\label{eq306}
\begin{array}{l}
     \dot{u}=-a u+v^{2n}(au-1-u^2), \\
     \dot{v}=-a v+v^{2n+1}(a-u). 
\end{array}
\end{equation*}

Doing $v=0$ we conclude that the origin of the local chart $(U_1,\phi_1)$ is the unique singular point at this local chart and the Jacobian matrix in $(0,0)$ is
 $$
 \left[\begin{array}{cc}
 -a & 0\\
 0 & -a
 \end{array}\right].
 $$
 So the origin from the local chart  $U_1$ is an atractor node if $a>0$ and it is a repeller node if $a<0$.

In the local chart $(U_2,\phi_2)$ the compactified vector field \eqref{eq2} is given by
\begin{equation*}\label{eq307}
\begin{array}{l}
     \dot{u}=a u^{2n+1}+v^{2n}(1-au+u^2), \\
     \dot{v}= u v^{2n+1}
\end{array}
\end{equation*}
Then the origin of the local chart $U_2$ is a degenerate singular point and, to investigate its local phase portrait we apply the vertical directional \textit{blow up}, see \cite{art7} for more details about blow ups. Using the change of variables $u=x$ and $v=zx$, and the reparametrization of time, that eliminate the common factor $x^{2n-1}$ of the two components of $\dot x$ and $\dot z$, we get the system 
$$
  \begin{array}{l}
       \dot{x}= ax^2+xz^{2n}(1-ax+x^2), \\
       \dot{z}=-z(ax+z^{2n} -az^{2n}x),
  \end{array}
 $$
 Where, again, the origin is the unique singular point of the system and it is degenerate so we apply the \textit{blow up} once again, this time we take the change of coordinates $x=w^{2n}r$ and $z=w$, that is equivalent to sucessive blow ups. After a reparametrization of the time that eliminate the common factor $w^{2n}$ from both equations, the system is written as
 \begin{equation} \label{eq2000}
     \begin{array}{l}
           \dot r = r((2n+1)(1+ar) -w^{2n}(a(2n+1))r+r^2w^{2n}),\\
           \dot w= w(-1+ a r(-1+w^{2n}))
           \end{array}
 \end{equation}
 
 Notice that the system (\ref{eq2000}) has two critical points, $(0,0)$ and $(-1/a,0)$. The Jacobian matrix at $(0,0)$ is
 $$
 \left[\begin{array}{cc}
 1+2n & 0\\
 0 & -1
 \end{array} \right],
 $$
 and so $(0,0)$ is a saddle point. The Jacobian matrix at $(-1/a,0)$ is
  $$
 \left[\begin{array}{cc}
-(1+2n) & 0\\
 0 & 0
 \end{array} \right],
 $$
 then the critical point $(-1/a,0)$ is a semi-hyperbolic point. After the reparametrization $r\rightarrow s-1/a,t\rightarrow -t$, the conditions of Proposition \ref{semihyperbolic} are satisfied, so, checking that $m$ is odd and $a_m<0$, we conclude that $(-1/a,0)$ is a saddle point. Applying the \textit{blowing down}, as represented in  Figure \ref{figura30},  we conclude the local behaviour of the origin of the local chart $U_2$.
 \begin{figure}
     \centering
     \includegraphics[scale=0.5]{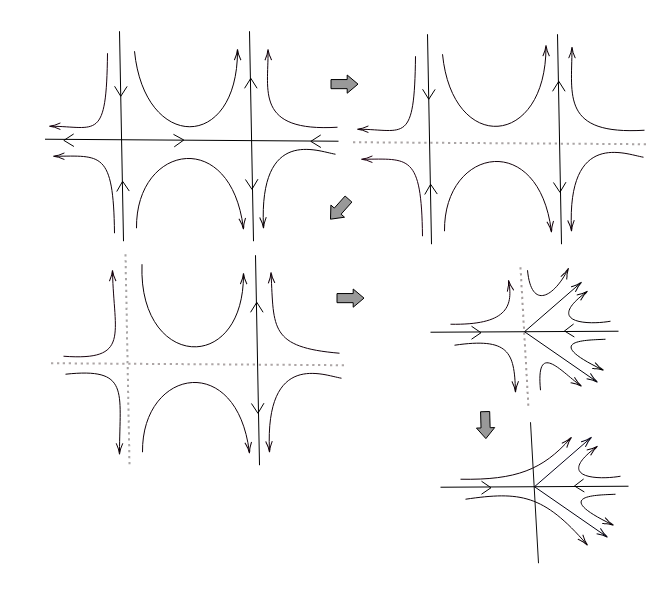}
     \caption{\textit{Blow up} process to the system (\ref{eq2000}) when $a<0$.}
     \label{figura30}
 \end{figure}

 Bringing together the local information about the infinite singular points in both local charts and using the continuity of the solutions we conclude that the orbits of system \eqref{eq2} near of $\mathbb S^1$ are topologically equivalent to the one described in Figure \ref{figurax}, except by the reversing of all orbits when $a\neq 0$. See that for $a=0$ system \eqref{eq1} is a linear center. \end{proof}
 
 \begin{remark}
 We point out that in \cite{art2} the statement equivalent to this last proposition has a misprint. The authors say that if $a>0$ the infinity singular point in the local chart $U_1$ is repeller when in fact it is an atractor. The same for $a<0$ to be an atractor when it is a repeller. 
 \end{remark}
 
 The next result describes the local phase portrait of system \eqref{eq2} at the finite singular points.
 
 \begin{lemma} \label{lemma:finite}
  System \eqref{eq2} has a unique finite singular point, the origin which is a stable node if $a\geq 2$, an unstable node if $a\leq -2$, a unstable focus if $-2<a<0$, a stable focus if $0 < a < 2$ and a center if $a=0$.
\end{lemma}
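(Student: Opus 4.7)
The plan is to reduce the classification of the finite equilibrium to a simple computation of the trace and determinant of the linear part at the unique fixed point, and then apply the criteria recalled in Section \ref{preliminar}.

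First I would locate the finite singular points. From $\dot y = -x = 0$ one immediately obtains $x=0$, and then $\dot x = y + a(x^{2n}-1)x = 0$ with $x=0$ reduces to $y=0$. So the origin is the only finite equilibrium, regardless of $a$ and $n$. Next I would compute the Jacobian at $(0,0)$. Since $n\geq 1$, every term coming from $ax^{2n+1}$ contributes $0$ to $P_x(0,0)$, and a direct calculation gives
$$
J(0,0)=\begin{pmatrix} -a & 1 \\ -1 & 0 \end{pmatrix},
$$
so that $\Gamma = -a$ and $\Delta = 1$. In particular $\Delta>0$ always, so the origin is non-degenerate and never a saddle.

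The case split is then driven by $\Gamma^2 - 4\Delta = a^2 - 4$. For $|a|>2$ the eigenvalues are real with product $1$ and sum $-a$, so both have the sign of $-a$: this yields a stable node when $a>2$ and an unstable node when $a<-2$. For $0<|a|<2$ the eigenvalues are complex conjugate with real part $-a/2$, which gives a stable focus when $0<a<2$ and an unstable focus when $-2<a<0$. These four open ranges are exactly the regime in which the classification in Section \ref{preliminar} applies directly.

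The two boundary cases require a little extra care, and this is where I expect the only real (mild) obstacle. At $a=\pm 2$ the linear part has a double real eigenvalue $\mp 1$ with a one-dimensional eigenspace; the discriminant condition $\Gamma^2-4\Delta>0$ for a node is not strictly satisfied, but the equilibrium is still hyperbolic, so by the Hartman--Grobman theorem it is topologically equivalent to the corresponding linear improper node, hence a stable node for $a=2$ and an unstable node for $a=-2$. At $a=0$ one has $\Gamma=0$ and $\Delta=1$, which by itself only guarantees a center or a weak focus; however for this value system \eqref{eq2} reduces to the linear harmonic oscillator $\dot x = y$, $\dot y = -x$, whose orbits are circles centered at the origin, so the origin is an exact linear center. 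With these two borderline cases handled, the lemma is complete.
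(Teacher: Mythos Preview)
Your proof is correct and follows the same approach as the paper: compute the Jacobian at the unique equilibrium and apply the trace--determinant classification from Section~\ref{preliminar}. You are in fact more careful than the paper's one-line proof, since you explicitly treat the borderline cases $a=\pm 2$ (via Hartman--Grobman) and $a=0$ (by observing the system is exactly the linear oscillator), which the paper's criteria in Section~\ref{preliminar} do not literally cover with the strict inequality $\Gamma^2-4\Delta>0$.
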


\begin{proof} We can easily verify that the unique critical point of system \eqref{eq2} is the origin and the Jacobian matrix of system \eqref{eq2} at the origin is given by 
$$
\left[\begin{array}{cc}
-a &	1\\
-1 &	0
\end{array}\right].
$$
So the lemma follows from the local study described in Section \ref{preliminar}.
\end{proof}

Next theorem guarantee the existence of at least one limit cycle for system \eqref{eq2}. 

\begin{theorem}\label{teo:ciclo}
The generalized Rayleigh system \eqref{eq2} has exactly one limit cycle if $a\neq 0$. 
\end{theorem}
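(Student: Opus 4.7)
The strategy is to establish existence via the Poincaré--Bendixson theorem and uniqueness by reducing system \eqref{eq2} to a Liénard form covered by Theorem \ref{teo101}. A key initial observation is that the involution $(x,y,t)\mapsto(x,-y,-t)$ sends system \eqref{eq2} with parameter $a$ to the same system with parameter $-a$ and carries limit cycles to limit cycles; it therefore suffices to carry out each half of the argument for one sign of $a$.

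For existence when $a<0$, Lemma \ref{lemma:finite} makes the origin a source, so the vector field crosses every sufficiently small circle centered at the origin outward. Proposition \ref{prop70} shows that every infinite singular point of the Poincaré compactification is a repeller. Working on the closed Poincaré disc, the compactness of $\mathbb{S}^1$ together with the repelling character of all points of the equator produces a large circle $\{x^2+y^2=R^2\}$ in the finite plane crossed strictly inward by the flow. The closed annulus between the two circles is positively invariant, contains no equilibrium of \eqref{eq2}, and so by Theorem \ref{bendixson} the $\omega$-limit of any of its points is a periodic orbit. This gives at least one limit cycle for $a<0$, and the symmetry above yields one also for $a>0$.

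For uniqueness when $a>0$, apply the reflection $(x,y)\mapsto(x,-y)$ to \eqref{eq2}, producing
\[
\dot x=-y+a(x^{2n}-1)x,\qquad \dot y=x,
\]
which matches the Liénard form \eqref{eq101a} with $\phi(y)=y$, $g(x)=x$ and $F(x)=a(1-x^{2n})x$, so that $f(x)=F'(x)=a\bigl(1-(2n+1)x^{2n}\bigr)$. A direct computation gives
\[
\frac{d}{dx}\!\left(\frac{f(x)}{g(x)}\right)=-a\!\left(\frac{1}{x^2}+(2n+1)(2n-1)x^{2n-2}\right)<0\qquad(x\neq 0),
\]
so $f/g$ is strictly decreasing on $(-\infty,0)$ and on $(0,+\infty)$ and clearly non-constant near the origin. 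The remaining hypotheses of Theorem \ref{teo101} reduce to the obvious checks $xg(x)=x^2>0$, $G(\pm\infty)=+\infty$, $F(0)=0$, $y\phi(y)=y^2>0$, and nonzero one-sided derivatives of $\phi$ at $0$. Theorem \ref{teo101} then yields at most one limit cycle for $a>0$, which together with existence gives exactly one; the involution transfers uniqueness to the case $a<0$.

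The main obstacle will be justifying the outer boundary in the Poincaré--Bendixson step. Proposition \ref{prop70} provides only the local behavior at each infinite singular point, and at the chart $U_2$ this behavior is itself recovered only after two successive blow-ups. One must therefore patch these local repellers around the whole of $\mathbb{S}^1$ and then pull a sufficiently thin negatively invariant neighbourhood of the equator of the compactified flow back to a round circle (or any transverse closed curve) in the finite plane; this is the delicate step and the one for which the Poincaré-disc language developed in Section \ref{preliminar} is indispensable.
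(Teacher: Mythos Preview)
Your proposal is correct and follows the same two-step strategy as the paper: Poincar\'e--Bendixson for existence (origin repelling, infinity repelling) and the Li\'enard criterion of Theorem \ref{teo101} for uniqueness. Two differences are worth noting. First, the paper's proof under this heading actually establishes only existence, deferring the Li\'enard uniqueness argument to the separate proof of Theorem \ref{teo1}; your version is the more complete match for the stated ``exactly one''. Second, the paper sidesteps the outer-boundary construction you flag as delicate by applying Poincar\'e--Bendixson directly on the closed Poincar\'e disc: every positive semi-orbit of the compactified flow is automatically contained in this compact set, and one need only check that no $\omega$-limit set can contain the origin or an infinite singular point (all of which are repellors by Lemma \ref{lemma:finite} and Proposition \ref{prop70}), so the patching of local charts into a global transverse curve in the finite plane becomes unnecessary.
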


\begin{proof}
Assume that $a<0$ then from Lemma \ref{lemma:finite} there exists a unique singular point of system \eqref{eq2}, the origin and it is a repellor. Moreover, from Proposition \ref{prop70}, we conclude that each solution of system \eqref{eq2} is moving away from $\mathbb S^1$ in the Poincar\'e disc.

As the Poincar\'e disc is a compact set with a unique singular point, it follows from the Poincar\'e--Bendixson Theorem (Theorem \ref{semihyperbolic}) that there exists at least one limit cycle.

The study of the case $a>0$ is analogous.
\end{proof}

Now we have each ingredient to prove Theorem \ref{teo1}.

\begin{proof}[Proof of the Theorem \ref{teo1}.] Assuming $a>0$ and applying in \eqref{eq2} the change of coordinates $(x,y,t) \rightarrow (x,y,-t)$ (that only changes the orientation of the orbits), we obtain $$ \dot{x}= -y +a(1-x^{2n})x, \qquad \dot{y}=x.$$

Taking $g(x)=x$, it is immediate that $g \in Lip$ in any finite interval of the real line and $xg(x)>0$, for $x\neq 0$. Moreover, 
$$
\int_0^{-\infty}sds=\int_0^{+\infty}sds=+\infty.
$$
If $f(x)=F'(x)=-a(1-x^{2n})+2n\,a\,x^{2n}=-a+a(2n+1)x^{2n}$, then $f(x)$ is a real valued continuous function defined in the real line such that $F(0)=0$. Therefore 
$$\dfrac{f(x)}{g(x)}=\dfrac{-a+a(2n+1)x^{2n}}{x}$$ is a non-decreasing real function in the intervals  $(-\infty,0)$ and $(0, +\infty)$ because the derivative of $f(x)/g(x)$ for $x\ne 0$ is positive, and also it is a non-constant function in any neighborhood of the origin. 

When $\phi(y)=y$, \, $y\phi(y)=y^2>0$ for all values of $y\neq 0$,  $\phi(y)\in Lip$ in any finite interval of the real line and $\phi(y)$ is non-decreasing function. 

In short, each one of the hypothesis of Theorem \ref{teo101} is satisfied and the existence of a unique limit cycle to system \eqref{eq2} when $a>0$ is guaranted. 

The same result is valid when $a<0$ because system \eqref{eq2} with $a>0$ is topologically equivalent to system \eqref{eq2} with $a<0$, through the change of coordinates $(x,y,t) \rightarrow (-x,y,-t)$. So Theorem \ref{teo1} is proved.
\end{proof}

\begin{remark}  The existence of at least one limit cycle for system \eqref{eq2} when the parameter $a\ne 0$ is sufficiently small was proved in \cite{art3}, but for proving this result the authors use the averaging theory of first order that does not guarantee the uniqueness of the limit cycle, it only guarantee the existence of at least one of such periodic orbit. Using Theorem \ref{teo101} from \cite{1} we have the uniqueness of the limit cycles and the desired result for any non zero $a \in \mathbb R\setminus \{0\}$ and for any positive integer $n$. 
\end{remark}

The proof of Theorem \ref{teo2} is a consequence from the following results: Proposition \ref{prop70},  Lemma \ref{lemma:finite}, Theorem \ref{teo:ciclo} and \ref{teo1}.

\section*{Acknowledgments} 
The first author is partially supported by Coordena\c{c}\~ao de Aperfei\c{c}oamento de Pessoal de N\'ivel Superior - Brasil (CAPES) - Finance Code 001. The second author is partially supported by the Ministerio de Ciencia, Innovaci\'on y Universidades, Agencia Estatal de Investigaci\'on grant PID2019-104658GB-I00, the Ag\`encia de Gesti\'o d'Ajuts Universitaris i de Recerca grant 2017SGR1617, and the H2020 European Research Council grant MSCA-RISE-2017-777911. The third author is partially supported by Projeto Tem\'atico FAPESP number 2019/21181--0 and by Bolsa de Produtividade-CNPq number 304766/2019--4.

\section*{Data availability}
Data sharing is not applicable to this article as no new data were created or analyzed in this study.
	
\bibliographystyle{acm}	
\bibliography{references}


\end{document}